\newtheorem{thm}{Theorem}
\theoremstyle{definition}
\newtheorem{defn}{Definition}
\begin{document}
	\begin{center}
	{\huge Analysis of Fractional Ordered incommensurate Quadratic Jerk System}
	
	{\Large Rasika Deshpande$^{a}$$^\dagger$, Amey Deshpande$^{b}$ \\
		\small $^{a}$Department of Mathematics and Statistics, Dr. Vishwanath Karad MIT World Peace University, Pune,India, Email: rasikadeshpande933@gmail.com \\
		\small $^{b}$Department of Mathematics and Statistics, Dr. Vishwanath Karad MIT World Peace University, Pune, India, Email: 2009asdeshpande@gmail.com, amey.deshpande@mitwpu.edu.in \\
		\small $^{\dagger}$: corresponding author\\
	}
	
\end{center}	
	
	\begin{abstract}
		Fractional ordered dynamical systems (FODS) are being studied in the present due to their innate qualitative and quantitative properties and their applications in various fields. The Jerk system, which is a system involving three differential equations with quadratic complexity, arises naturally in wide ranging fields, and hence a qualitative study of solutions of jerk system and its various parameters under different conditions is important. In this article, we have studied phenomena of the Hopf bifurcation and chaos occurring in fractional ordered commensurate and incommensurate quadratic jerk system. The equilibrium points of the system are obtained and are found to be $(\pm\epsilon,0,0)$, where $\epsilon$ denotes the system parameter versus which bifurcation is analyzed. We have presented the criteria for commensurate and incommensurate quadratic jerk system to undergo a Hopf bifurcation. The value of  $\epsilon$ at which system undergoes Hopf bifurcation $\epsilon_{H}$ is obtained for both commensurate as well as incommensurate system. It is known that supercritical Hopf bifurcation leads to chaos. The obtained results are verified through numerical simulations versus the fractional order $\alpha$ and parameter $\epsilon$ and the explicit range in which the system exhibits chaos is found. A number of phase portraits, bifurcation diagrams, and Lyapunov exponent diagrams are presented to affirm the obtained chaotic range of parameters.
	\end{abstract}
	\section{Introduction}
	Fractional Calculus is a section of mathematics that involves the study of differential and integral operators of arbitrary real order\cite{podlubny1999fractional}. Fractional ordered dynamical systems (FODS) are recently in focus due to various qualitative and quantitative results exhibited  by them\cite{deshpande2019analysis,daftardar2010chaos,kaslik2012nonlinear}. There is a conjecture that for FODS with continuous time, there exists an order below which the system loses chaos\cite{DESHPANDE2017119}. FODS have vital applications in electrochemistry\cite{zou2018review},nuclear and particle physics\cite{herrmann2023fractional}, cryptographical models\cite{natiq2022image}, viscoelasticity\cite{craiem2010fractional}, control theory\cite{redhwan2019some}, meteorology\cite{yajima2018geometric} and neural network\cite{viera2022artificial}.
	
	The Jerk system, which is a quadratic system, has applications in numerous fields, one of the few are, chemical kinetics\cite{yadav2017stability}, seismic control of civil structures\cite{geoffrey2003quadratic}, electronic circuit implementation\cite{hammouch2018circuit}, encryption of image to audio data\cite{rajagopal2020exponential}. Chase \textit{et al.}\cite{geoffrey2003quadratic} have introduced a new method to improve the performance of civil structures undergoing large seismic events by regulating the total structural jerk. They have used a single Riccati equation to define the control gains by using quadractic regulator for total structral jerk. They have shown that quadratic jerk regulation responds better than the current structural control methods for seismic events. Rajagopal \textit{et al.} \cite{rajagopal2020exponential} have used the randomness of the jerk system to create an encryption algorithm that converts images into audio data and the image is recovered from the hidden audio file. They have designed a fractional order chaotic jerk system with one-quadratic nonlinearity and two-cubic nonlinearities and the system is solved by using Adomain decomposition method to generate complex chaotic signals. Kacar \textit{et al.} \cite{vaidyanathan2018new} have discovered the propertries of a new jerk system having two quadratic nonlinearities. They have shown that the new jerk system has complex dynamics characteristics which can be used for random number generator,image encryption algorithm, electronic
	circuit implementation, artificial neural network, genetic algorithm etc.
	From the survey above it is quite evident that the applications of jerk system has no bounds and hence the study of jerk system for various parameters under different conditions will create more efficient applications that can be rendered in real life.
	Hence the study of Quadractic jerk system is conducted in the next sections.
	
	\section{Preliminaries}\label{sec1}
	
	\begin{defn}[\cite{podlubny1999fractional}]
		The fractional integral of order $\alpha > 0$ of a real valued function $f$ is defined as
		\begin{align*}
			I^{\alpha}f(t)=\frac{1}{\Gamma(\alpha)}\int\limits_{0}^{t}\frac{f(\tau)}{(t-\tau)^{1-\alpha}}d\tau.
		\end{align*}
	\end{defn}
	
	\section{Analysis of Quadratic Jerk system}
	\begin{thm}
		Consider the system for $t>0$,\label{main1}
		\begin{equation}
			\begin{split}
				D^{\alpha}x(t)& = y(t),
				\\	D^{\alpha}y(t)& = z(t),
				\\D^{\alpha}z(t) &= -\epsilon^{2}-by(t)-a \epsilon z(t)+(x(t))^{2},
			\end{split}	
		\end{equation}
		where $a>0, b>0$, $\epsilon \in \mathbb{R}$ and $ 0<\alpha\leq1$ is the fractional order. Then a Hopf critical value of the bifurcation parameter $\epsilon$, denoted as $\epsilon_{H}$ is given as,
		\begin{equation}
			\epsilon_{H}=\frac{-(\gamma_{H}^{3}\cos3(\theta)+\gamma_{H}b\cos(\theta))}{(\gamma_{H}^{2}a\cos2(\theta)\mp2)},
		\end{equation}
		where $\gamma_{H}$ denotes the modulus of the eigen values of the $Jacobian$ at equilibrium points $E_{1}=(\epsilon,0,0), E_{2}=(-\epsilon,0,0)$ and $\alpha \neq\pm\frac{1}{\pi}\cos^{-1}\left( \pm\frac{2}{a\gamma_{H}^{2}}\right)$.
		
		\noindent This system will under go Hopf Bifurcation in following cases,\\
		\textbf{Case I} : If $\alpha>\frac{2}{3}$ and $b>0$,\\
		\textbf{Case II} : If $\alpha<\frac{2}{3}$ and $b<0$.

	\end{thm}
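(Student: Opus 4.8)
The plan is to linearize the system at the equilibria $E_{1,2}=(\pm\epsilon,0,0)$, analyze the characteristic polynomial of the Jacobian, and impose the known stability-boundary condition for fractional-order systems (the eigenvalues must sit on the rays $|\arg\lambda|=\alpha\pi/2$ in the complex plane) to extract the critical parameter value $\epsilon_H$. First I would compute the Jacobian of the right-hand side: since the first two equations are linear and the third has the nonlinearity $x^2$, the Jacobian at a point $(x_0,0,0)$ is
\begin{equation*}
J=\begin{pmatrix} 0 & 1 & 0 \\ 0 & 0 & 1 \\ 2x_0 & -b & -a\epsilon \end{pmatrix},
\end{equation*}
so at $E_{1,2}$ we get $2x_0=\pm2\epsilon$, and the characteristic polynomial is $\lambda^3 + a\epsilon\,\lambda^2 + b\,\lambda \mp 2\epsilon = 0$. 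Next I would invoke the fractional Hopf-bifurcation criterion (as developed for commensurate FODS, e.g.\ via the Matignon-type stability condition): a Hopf bifurcation occurs when a pair of eigenvalues crosses the stability ray, i.e.\ when $\lambda_{1,2}=\gamma_H e^{\pm i\theta}$ with $\theta=\alpha\pi/2$ lying exactly on the boundary, together with a transversality condition on $\tfrac{d}{d\epsilon}\bigl(|\arg\lambda(\epsilon)|\bigr)$ at $\epsilon=\epsilon_H$.

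Then I would substitute $\lambda=\gamma_H e^{i\theta}$ into the characteristic polynomial $\lambda^3+a\epsilon\lambda^2+b\lambda\mp2\epsilon=0$ and separate real and imaginary parts. Writing $\lambda^3=\gamma_H^3 e^{3i\theta}$, $\lambda^2=\gamma_H^2 e^{2i\theta}$, the real part reads
\begin{equation*}
\gamma_H^3\cos3\theta + a\epsilon\,\gamma_H^2\cos2\theta + b\,\gamma_H\cos\theta \mp 2\epsilon = 0,
\end{equation*}
and solving this linear equation for $\epsilon$ immediately yields
\begin{equation*}
\epsilon_H = \frac{-\bigl(\gamma_H^3\cos3\theta + b\,\gamma_H\cos\theta\bigr)}{\gamma_H^2 a\cos2\theta \mp 2},
\end{equation*}
which is exactly the claimed formula; the denominator non-vanishing is precisely the excluded condition $\alpha\neq\pm\frac1\pi\cos^{-1}\!\bigl(\pm\frac{2}{a\gamma_H^2}\bigr)$ (since $\cos2\theta=\cos(\alpha\pi)$). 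The imaginary part gives a second relation that, together with the real part, pins down $\gamma_H$ and $\theta$ consistently. Finally, for the two cases I would examine the sign constraints: feasibility of a purely-on-the-boundary eigenvalue pair forces conditions relating $\mathrm{sgn}(b)$ to whether $\cos(\alpha\pi/2)$-type quantities are positive, and the threshold $\alpha=2/3$ arises because at $\theta=\alpha\pi/2$ one has $3\theta=\alpha\pi\cdot\tfrac32$, so $\cos3\theta$ and related terms change sign as $\alpha$ passes through $2/3$ (equivalently $3\theta$ passes through $\pi$); tracking these signs gives Case I ($\alpha>2/3$, $b>0$) and Case II ($\alpha<2/3$, $b<0$).

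The main obstacle I anticipate is the transversality condition: showing rigorously that as $\epsilon$ varies through $\epsilon_H$ the argument of the critical eigenvalue pair actually crosses $\alpha\pi/2$ with nonzero speed (rather than merely touching it), which requires implicitly differentiating the characteristic polynomial with respect to $\epsilon$ and checking $\tfrac{d}{d\epsilon}\operatorname{Re}\!\bigl(e^{-i\theta}\lambda(\epsilon)\bigr)\neq0$ or the equivalent argument-derivative condition. A secondary technical point is making sure the reduction via real/imaginary parts is not over- or under-determined — we have two scalar equations (real and imaginary parts) in the unknowns $\epsilon$, $\gamma_H$ with $\theta$ fixed by $\alpha$, so one should verify the system is consistent and, where relevant, that $\gamma_H>0$; I would handle this by using the imaginary-part equation to express $\gamma_H$ in terms of the parameters and back-substituting. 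The case analysis itself is then a routine sign chase once the trigonometric identities $\cos2\theta=2\cos^2\theta-1$ and $\cos3\theta=4\cos^3\theta-3\cos\theta$ with $\theta=\alpha\pi/2$ are in hand.
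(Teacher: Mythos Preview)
Your approach is essentially the same as the paper's: linearize at $E_{1,2}$, write the characteristic polynomial $\lambda^3+a\epsilon\lambda^2+b\lambda\mp2\epsilon$, substitute $\lambda=re^{i\theta}$ with $\theta=\alpha\pi/2$, and split into real and imaginary parts. Your derivation of $\epsilon_H$ from the real part matches the paper exactly, including the denominator condition.

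Two points of divergence are worth flagging. First, your sign-chase sketch for the case split is slightly off: at $\alpha=2/3$ one has $3\theta=\pi$, where it is $\sin(3\theta)$, not $\cos(3\theta)$, that changes sign. The paper's argument runs entirely through the \emph{imaginary} part, which after dividing by $r$ is the quadratic
\[
r^{2}\sin(3\theta)+a\epsilon\,r\sin(2\theta)+b\sin(\theta)=0,
\]
with product of roots $r_1r_2=b\sin\theta/\sin(3\theta)$. In Case~I ($\alpha>2/3$, $b>0$) one has $\sin\theta>0$, $\sin(3\theta)<0$; in Case~II ($\alpha<2/3$, $b<0$) one has $\sin\theta>0$, $\sin(3\theta)>0$. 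Either way $r_1r_2<0$, so exactly one root is positive and furnishes $\gamma_H$. This is the concrete mechanism you allude to when you say the imaginary part ``pins down $\gamma_H$'', and it is worth stating explicitly since it is what actually produces the $b\gtrless0$ dichotomy.

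Second, your concern about transversality is well founded but goes beyond what the paper establishes: the paper simply locates $\epsilon_H$ and verifies the existence of a positive $\gamma_H$, without checking that $\tfrac{d}{d\epsilon}|\arg\lambda(\epsilon)|\neq0$ at $\epsilon_H$. So that step, while mathematically desirable, is not part of the argument you are being asked to reproduce.
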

	\begin{proof}
		Consider the given system
		\begin{equation}
			\begin{split}\label{l1}
				D^{\alpha}x(t)& = y(t),
				\\	D^{\alpha}y(t)& = z(t),
				\\D^{\alpha}z(t) &= -\epsilon^{2}-by(t)-a \epsilon z(t)+(x(t))^{2}.
			\end{split}	
		\end{equation}
		To obtain equilibrium points, let,$y(t)=0$, $z(t)=0$ in the given system,
		\begin{equation}
			\begin{split}
				-\epsilon^{2}-by-a \epsilon z+x^{2}=0,\\x=\pm \epsilon.
			\end{split}
		\end{equation}
		Hence equilibrium points are
		$E_{1}=(\epsilon,0,0)$ and $E_{2}=(-\epsilon,0,0)$ .
		\\Let	$ x(t)=f_{1}, y(t)=f_{2}, z(t)=f_{3}$ in equation \eqref{l1}.\\
		The Jacobian Matrix is given by,
		\begin{equation}\label{l2}
			\begin{split}
				J_{E_{{1},{2}}}&=
				\begin{bmatrix}
					0 & 1 & 0
					\\ 0 & 0 & 1
					\\2x & -b & -a \epsilon    	
				\end{bmatrix}
				\\ c(\lambda)&= \lambda^{3}+a \epsilon \lambda^{2}+ \lambda(b+0+0)\mp 2 \epsilon\\&=\lambda^{3}+a \epsilon \lambda^{2}+ \lambda b\mp 2 \epsilon.
			\end{split}
		\end{equation}
		Let $\lambda=re^{i(\theta)}$, where $(\theta)=\pm \frac{\pi\alpha}{2}$.
		Substitute the value of $\lambda$ in equation \eqref{l2},
		$c(re^{i(\theta)})=(re^{i(\theta)})^{3}+a \epsilon( re^{i(\theta)})^{2}+ (re^{i (\theta)} )b\mp 2 \epsilon$ \\
		The real part of above equation is, $r^{3}\cos3(\theta)+r^{2}a\epsilon\cos2(\theta)+br\cos(\theta) \mp2\epsilon=0$,\\
		The imaginary part of above equation is, $r^{3}\sin3(\theta)+r^{2}a\epsilon\sin2(\theta)+br\sin(\theta) =0,$
		\begin{equation} \label{r12}
			r_{1,2}=\frac{-a \epsilon\sin2(\theta)\pm\sqrt{a^{2}\epsilon^{2}\sin^{2}2(\theta)-4 b\sin3(\theta)\sin(\theta)}}{2\sin3(\theta)} =\frac{-a \epsilon\sin2(\theta)\pm\sqrt{\Delta}}{2\sin3(\theta)},
		\end{equation} where $\Delta=a^{2}\epsilon^{2}\sin^{2}2(\theta)-4 b\sin3(\theta)\sin(\theta)$.\\
		Let, $\Delta > 0$ in equation \eqref{r12}, where $\Delta=a^{2}\epsilon^{2}\sin^{2}2(\theta)-4 b\sin3(\theta)\sin(\theta)$.
		
		\noindent We have, $(\theta)=\pm \frac{\pi\alpha}{2}$, then consider the following two cases,
		
		\noindent\textbf{Case I:} If $\alpha>\frac{2}{3}$. We obtain the following,
		\begin{table}[H]
			\centering
			\begin{tabular}{ | m{6em} | m{6em}| } 
				\hline
				$(\theta)>\frac{\pi}{3} $& $ (\theta)<-\frac{\pi}{3}   $  \\ 
				\hline
				$ \sin(\theta)>0 $& $ \sin(\theta)<0  $   \\ 
				\hline
				$\sin2(\theta)>0 $& $ \sin2(\theta)<0 $   \\ 
				\hline
				$\sin3(\theta)<0  $&  $\sin3(\theta)>0   $  \\ 
				\hline
			\end{tabular}
			\caption{Values of $\sin(\theta), \sin2(\theta), \sin3(\theta)$ for $(\theta)>\frac{\pi}{3} $ and $ (\theta)<-\frac{\pi}{3}.$}
		\end{table}
		\noindent Substitute the conditions in Table 1 in equation \eqref{r12}, the result is, $\sin(\theta)\sin3(\theta)<0$.
		Hence if $b>0$ then $\Delta>0$.\\
		
		\noindent\textbf{Case II:} If $\alpha<\frac{2}{3}$. We obtain the following,
		\begin{table}[H]
			\centering
			\begin{tabular}{ | m{6em} | m{6em}| } 
				\hline
				$ (\theta)<\frac{\pi}{3}$  \\ 
				\hline
				$ \sin(\theta)>0 $  \\ 
				\hline
				$\sin2(\theta)>0 $  \\ 
				\hline
				$\sin3(\theta)>0  $ \\ 
				\hline
			\end{tabular}
			\caption{Values of $\sin(\theta), \sin2(\theta), \sin3(\theta)$ for $(\theta)>\frac{\pi}{3}.$}
		\end{table}
		\noindent Substitute the conditions in Table 2 in equation \eqref{r12}, the result is,$\sin(\theta)\sin3(\theta)>0$.
		Hence if $b<0$ then $\Delta>0$.\\
		From equation \eqref{r12},
		\begin{equation}
			r_{1}r_{2}=\frac{b\sin(\theta)}{\sin3(\theta)}<0.
		\end{equation}
		From Case I and Case II it follows that either $r_{1}$ or $r_{2}$ will be positive.
		\\
		Let $\gamma_{H}$ denote the positive root and $\lambda_{1}=\gamma_{H}e^{\frac{i\alpha\pi}{2}}$, $\lambda_{2}=\gamma_{H}e^{-\frac{i\alpha\pi}{2}}$.
		Substitute the above values in $r^{3}\cos3(\theta)+r^{2}a\epsilon\cos2(\theta)+br\cos(\theta) \mp2\epsilon=0$ to find the Hopf Critical Value.
		Now $b,\gamma_{H},a$ and $(\theta)$ are known. We will solve to find the value of $\epsilon$.
		\begin{equation}
			\begin{split}
				&\gamma_{H}^{3}\cos3(\theta)+\gamma_{H}^{2}a\epsilon\cos2(\theta)+\gamma_{H}b\cos(\theta) \mp2\epsilon=0,
				\\&\gamma_{H}^{3}\cos3(\theta)+\gamma_{H}b\cos(\theta) +\epsilon(\gamma_{H}^{2}a\cos2(\theta)\mp2)=0,
				\\&\epsilon_{H}=\frac{-(\gamma_{H}^{3}\cos3(\theta)+\gamma_{H}b\cos(\theta))}{\gamma_{H}^{2}a\cos2(\theta)\mp2},
			\end{split}
		\end{equation}
		where $\gamma_{H}^{2}a\cos2(\theta)\mp2$ is non-zero.
		
		\begin{equation}
			\begin{split}
				\gamma_{H}^{2}a\cos2(\theta)\mp2\neq0
				\\\alpha \neq\pm\frac{1}{\pi}\cos^{-1}\left( \pm\frac{2}{a\gamma_{H}^{2}}\right).
			\end{split}
		\end{equation}
		Hence the proof.
	\end{proof}

	\begin{figure}[p]
		\centering
		\begin{subfigure}[b]{0.45\textwidth}
			\centering
			\includegraphics[width=\textwidth]{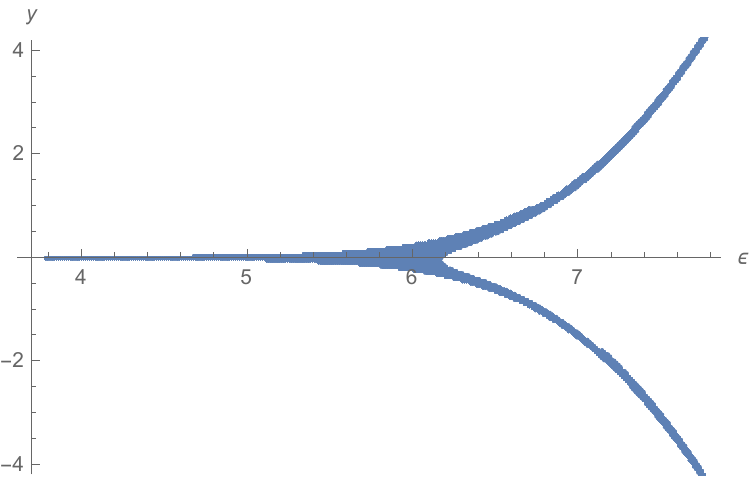}
			\subcaption{Bifurcation diagram with parameter $\alpha$=0.91, $\epsilon$ from range 3.781 to 7.780, initial conditions (0,0,0)}
			\label{digg1}
		\end{subfigure}
		\begin{subfigure}[b]{0.45\textwidth}
			\centering
			\includegraphics[width=\textwidth]{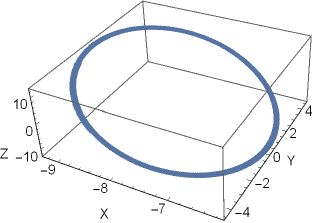}
			\subcaption{Phase Potrait for $\epsilon$=7.780 and $\alpha$=0.91}
			\label{digg2}
		\end{subfigure}
		\caption{}
	\end{figure}
	
	\begin{figure}[p]
		\centering
		\includegraphics[width=0.7\linewidth]{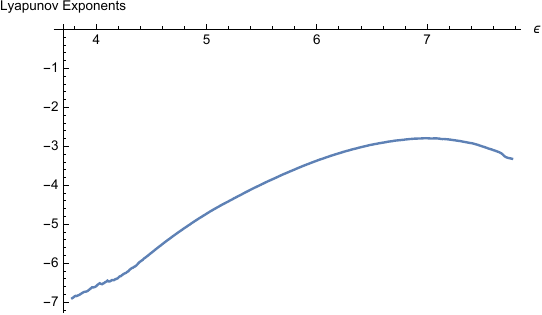}
		\caption{Lypapunov diagram with parameter $\epsilon$ from range 3.781 to 7.780 and lyapunov exponents}
		\label{fig:PhaseP_epsilon7780}
	\end{figure}

	\begin{figure}[p]
		\centering
		\begin{subfigure}[b]{0.45\textwidth}
			\centering
			\includegraphics[width=\textwidth]{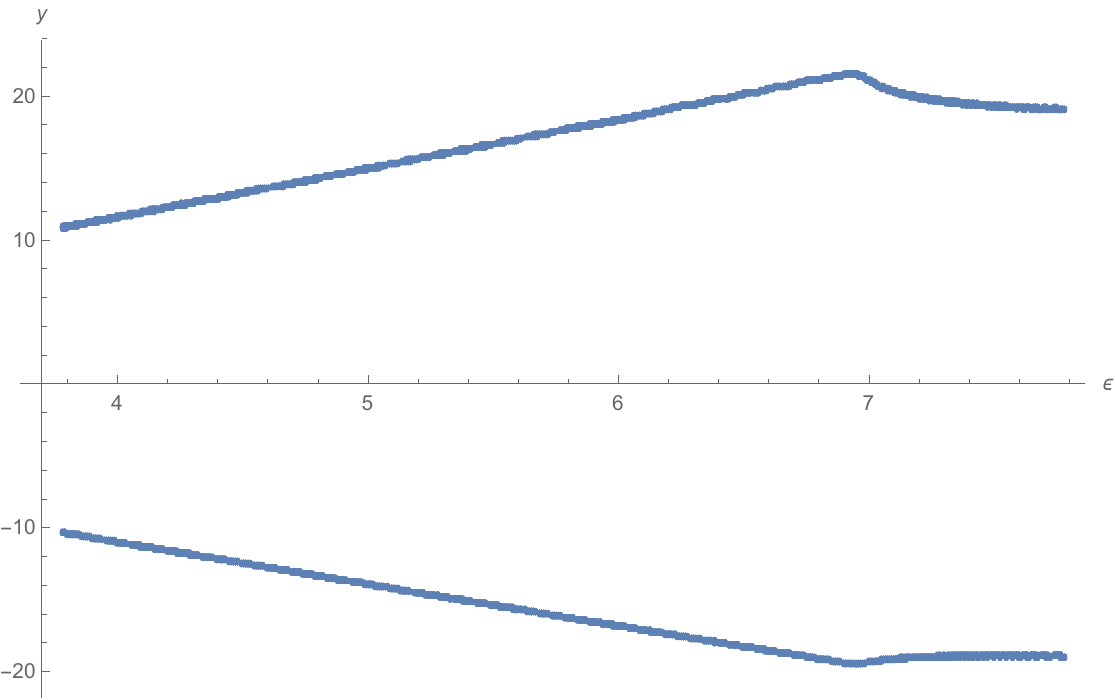}
			\subcaption{Bifurcation diagram with parameter $\alpha$=0.98, $\epsilon$ from range 3.781 to 7.780, initial conditions (0,0,0)}
			\label{figgg1}
		\end{subfigure}
		\begin{subfigure}[b]{0.45\textwidth}
			\centering
			\includegraphics[width=\textwidth]{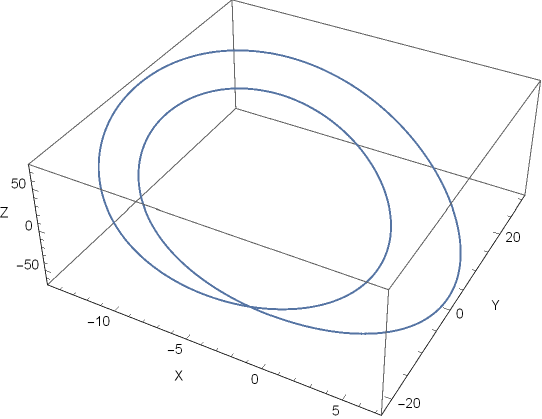}
			\subcaption{Phase Potrait for $\epsilon$=7.750 and $\alpha$=0.98}
			\label{figgg2}
		\end{subfigure}
		\caption{}
		\label{}
	\end{figure}
	
	\begin{figure}[p]
		\centering
		\includegraphics[width=0.7\linewidth]{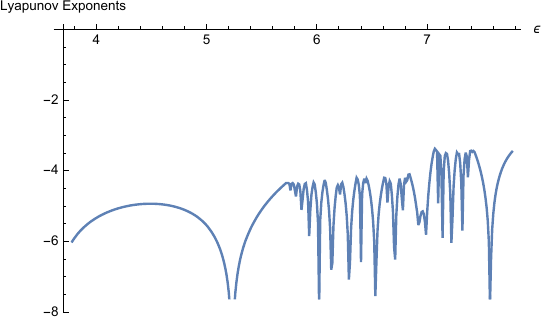}
		\caption{Lypapunov diagram with parameter $\epsilon$ from range 3.781 to 7.780 and lyapunov exponents}
		\label{fig:091phasepepsilon7750}
	\end{figure}
	
	\begin{figure}[p]
		\centering
		\begin{subfigure}[b]{0.45\textwidth}
			\centering
			\includegraphics[width=\textwidth]{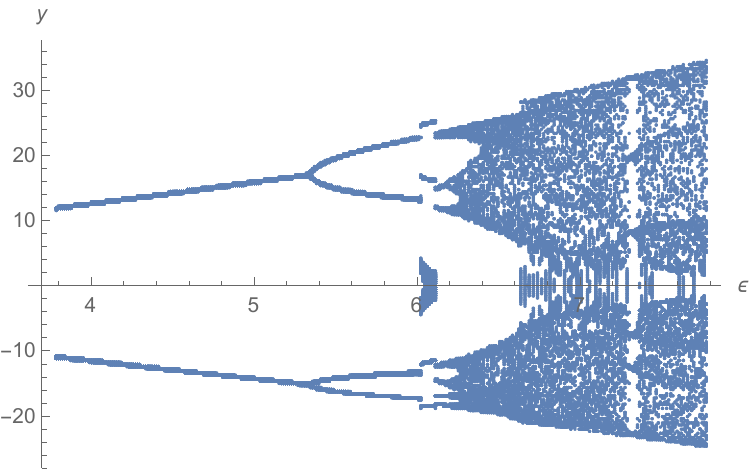}
			\subcaption{Bifurcation diagram with parameter $\alpha$=0.99, $\epsilon$ from range 3.781 to 7.780, initial conditions (0,0,0)}
			\label{ex1}
		\end{subfigure}
		\begin{subfigure}[b]{0.45\textwidth}
			\centering
			\includegraphics[width=\textwidth]{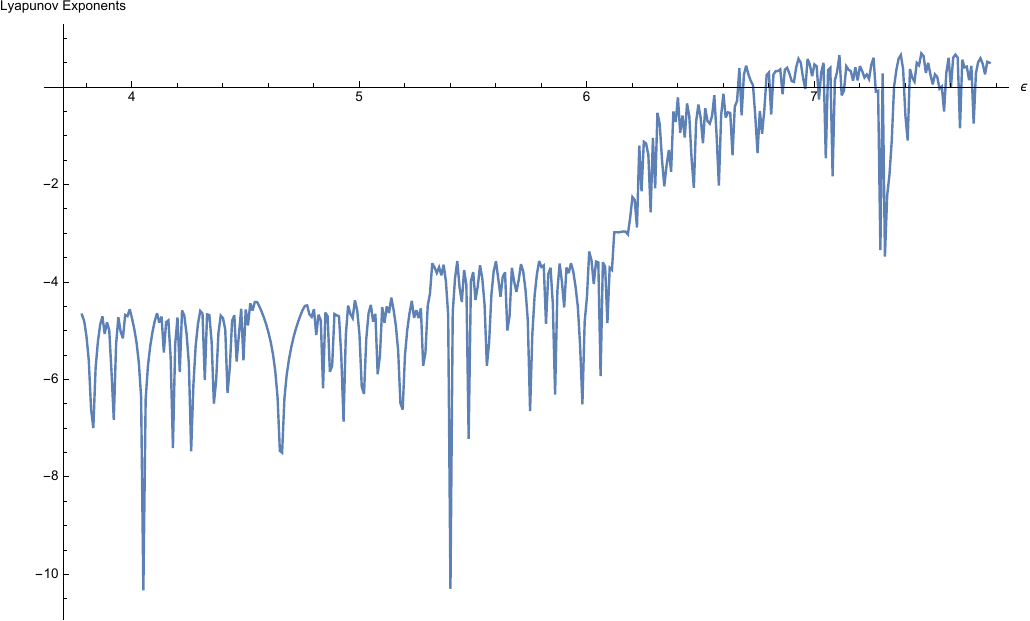}
			\subcaption{Lypapunov diagram with parameter $\epsilon$ from range 3.781 to 7.780 and lyapunov exponents}
				\label{ex2}
		\end{subfigure}
		\caption{}
	\end{figure}
	
	\begin{figure}[p]
		\centering
		\begin{subfigure}[b]{0.45\textwidth}
			\includegraphics[width=0.7\linewidth]{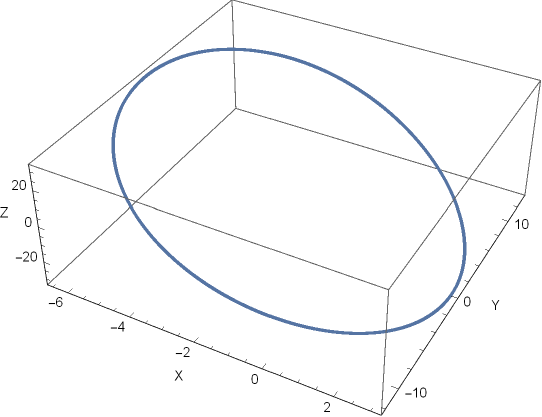}
			\caption{Phase Potrait for $\epsilon$=3.783 and $\alpha$=0.99}
			\label{dig1}
		\end{subfigure}
		\begin{subfigure}[b]{0.45\textwidth}
			\includegraphics[width=0.7\linewidth]{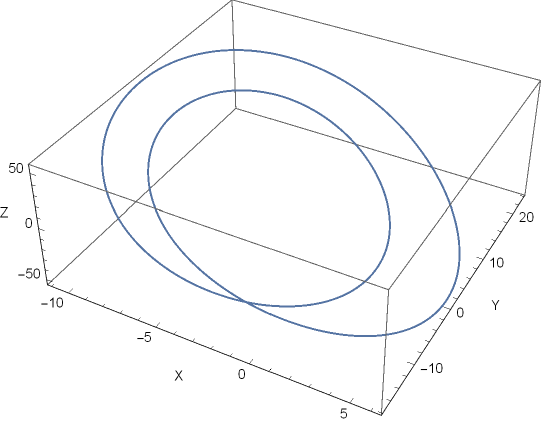}
			\caption{Phase Potrait for $\epsilon$=5.781 and $\alpha$=0.99}
			\label{dig2}
		\end{subfigure}
		\begin{subfigure}[b]{0.45\textwidth}
			\includegraphics[width=0.7\linewidth]{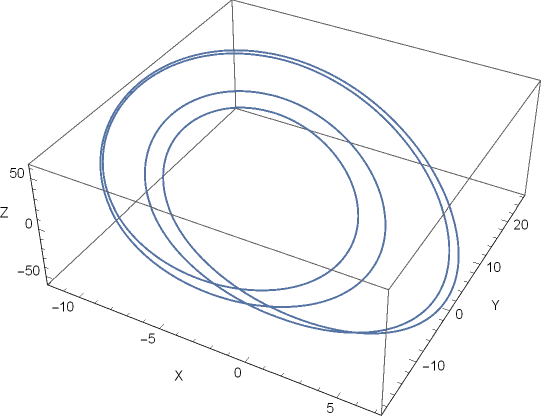}
			\caption{Phase Potrait for $\epsilon$=6.142 and $\alpha$=0.99}
			\label{dig3}
		\end{subfigure}
		\begin{subfigure}[b]{0.45\textwidth}
			\includegraphics[width=0.7\linewidth]{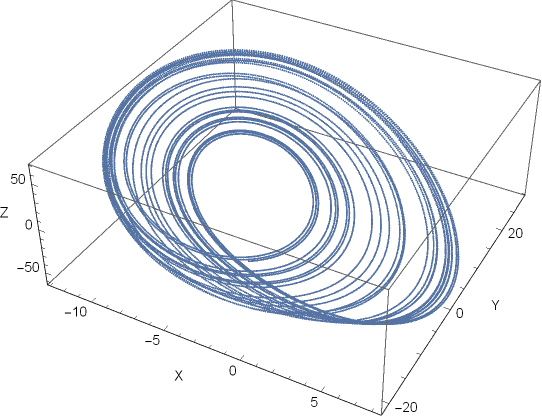}
			\caption{Phase Potrait for $\epsilon$=6.611 and $\alpha$=0.99}
			\label{dig4}
		\end{subfigure}
		\begin{subfigure}[b]{0.45\textwidth}
			\includegraphics[width=0.7\linewidth]{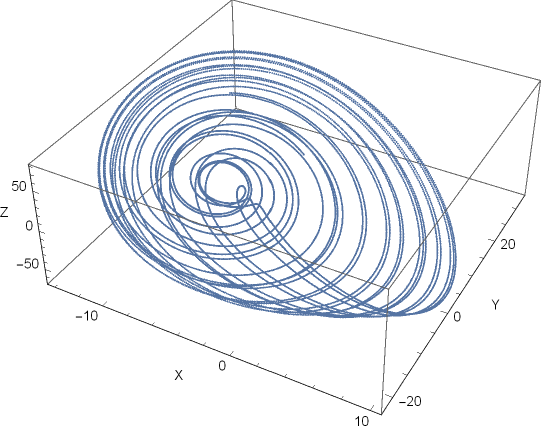}
			\caption{Phase Potrait for $\epsilon$=7.079 and $\alpha$=0.99}
			\label{dig5}
		\end{subfigure}
		\begin{subfigure}[b]{0.45\textwidth}
			\includegraphics[width=0.7\linewidth]{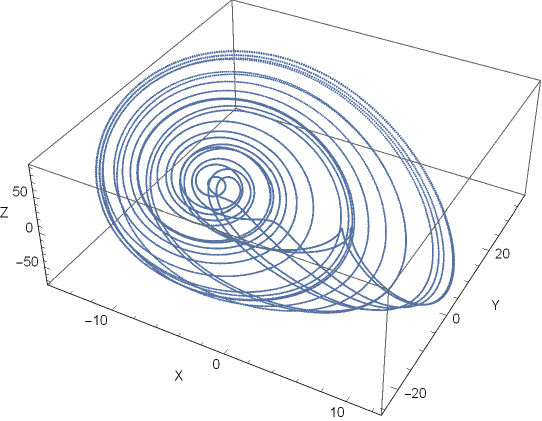}
			\caption{Phase Potrait for $\epsilon$=7.780 and $\alpha$=0.99}
			\label{dig6}
		\end{subfigure}
		\caption{}
		\label{}
	\end{figure}
	
	\section{Result Analysis of Commensurate Quadratic Jerk system }
	
	In this section we have analysied system for \eqref{main1}. Figure \eqref{digg1} represents a process known as bifurcation for parameters $\alpha=0.91$ and $\epsilon$ from range $3.781$ to $7.780$ with $a=0.129, b=7$. Two branches in the diagram shows data two distinct paths traced by data. Figure \eqref{digg2} represents phase potrait for $\epsilon=7.780$, $\alpha=0.91$. The single orbit concludes that there is no chaos exhibited at this point.
	
	\noindent Figure \eqref{fig:PhaseP_epsilon7780} represents the distribution of lyapunov exponents with parameter $\epsilon$ from range $3.781$ to $7.780$.
	
	\noindent Figure \eqref{figgg1} represents a process known as bifurcation for parameters $\alpha=0.98$ and $\epsilon$ from range $3.781$ to $7.780$ with $a=0.129, b=7$. Figure \eqref{figgg2} represents phase potrait for $\epsilon=7.780$, $\alpha=0.98$. Two orbits conclude that there is no chaos exhibited at this point but it may exist in the further analysis.
	
	\noindent Figure \eqref{fig:091phasepepsilon7750} represents the distribution of lyapunov exponents with parameter $\epsilon$ from range $3.781$ to $7.780$, the negative values of lyapunov exponents confirm that there is no chaos.
	
	\noindent Figure \eqref{ex1} represents a process known as bifurcation for parameters $\alpha=0.99$ and $\epsilon$ from range $3.781$ to $7.780$ with $a=0.129, b=7$. The dense distribution of the data represents exponential increase of orbits leading to chaos. Figure \eqref{ex2} represents the distribution of lyapunov exponents with parameter $\epsilon$ from range $3.781$ to $7.780$. The positive values for lyapunov exponents in the later part confirms presence of chaos.
	
	\noindent Figure \eqref{dig1} represents phase potrait at $3.783$ and $\alpha=0.99$. Single orbit represents no chaos. Figure \eqref{dig2} and \eqref{dig3} show the increase in number of orbits which will lead to chaos. Figure \eqref{dig4} and \eqref{dig5} shows the begining of chaos. Figure \eqref{dig6} shows the chaos for the system \eqref{main1} at parameters $\epsilon=7.780$, $\alpha=0.99$, $a=0.129$ and $b=7$.

	\section{Analysis of Incommensurate Quadratic Jerk system}
	\begin{thm}
		Consider the system for $t>0$,\label{main2}
		\begin{equation}
			\begin{split}
				D^{\alpha_{1}}x(t)& = y(t),
				\\	D^{\alpha_{2}}y(t)& = z(t),
				\\D^{\alpha_{3}}z(t) &= -\epsilon^{2}-by(t)-a \epsilon z(t)+(x(t))^{2},
			\end{split}	
		\end{equation}
		where $a>0, b>0$, $\epsilon \in \mathbb{R}$ and $ 0<\alpha_{1},\alpha_{2},\alpha_{3}\leq1$ is the fractional order. 
		In above equation $\alpha_{1}=\frac{v_{1}}{u_{1}}, \alpha_{2}=\frac{v_{2}}{u_{2}}, \alpha_{3}=\frac{v_{3}}{u_{3}}$ and $u_{1}, u_{2}, u_{3}, v_{1}, v_{2}, v_{3}\in \mathbb{Z}$, $gcd(u_{i}, v_{i})=1, M=lcm(u_{1}, u_{2}, u_{3}).$
		
		This system will exhibit Hopf bifurcation in following cases for $a=0.129$ and $b=7$:

		\noindent \textbf{Case I} : If $p\geq m$,\\
		\textbf{Case II} : If $m>p$ and $0<(p+q+m)\theta\leq\pi$.
		
		A Hopf critical value of the bifurcation parameter $\epsilon$ in each case, denoted as $\epsilon_{H}$ is given as,
		\begin{equation}
			\epsilon_{H}=\frac{-(\gamma_{H}^{p+q+m}\cos(p+q+m)(\theta)+\gamma_{H}^{p}\cos p(\theta)b)}{\gamma_{H}^{p+q}\cos(p+q)(\theta)a\mp2},
		\end{equation}	
		where $\gamma_{H}$ denotes the modulus of the eigen values of the $Jacobian$ at equilibrium points $E_{1}=(\epsilon,0,0), E_{2}=(-\epsilon,0,0)$ of the system.

	\end{thm}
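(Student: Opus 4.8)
The plan is to run the argument of Theorem~\ref{main1} again, but now carrying three distinct orders through the characteristic equation. Setting $y=z=0$ in the third equation gives $x^{2}=\epsilon^{2}$, so the equilibria are still $E_{1}=(\epsilon,0,0)$, $E_{2}=(-\epsilon,0,0)$ and the Jacobian $J_{E_{1,2}}$ is literally the matrix in \eqref{l2}. The only genuinely new ingredient is the characteristic equation of an \emph{incommensurate} linear fractional system: rather than $\det(\lambda I-J_{E_{1,2}})$ one forms
\begin{equation*}
\det\big(\mathrm{diag}(\lambda^{\alpha_{1}},\lambda^{\alpha_{2}},\lambda^{\alpha_{3}})-J_{E_{1,2}}\big)=\lambda^{\alpha_{1}+\alpha_{2}+\alpha_{3}}+a\epsilon\,\lambda^{\alpha_{1}+\alpha_{2}}+b\,\lambda^{\alpha_{1}}\mp 2\epsilon=0.
\end{equation*}
Since $\alpha_{i}=v_{i}/u_{i}$ and $M=\mathrm{lcm}(u_{1},u_{2},u_{3})$, each of $M\alpha_{1}$, $M(\alpha_{1}+\alpha_{2})$, $M(\alpha_{1}+\alpha_{2}+\alpha_{3})$ is an integer, so the substitution $\lambda=\Lambda^{M}$ (equivalently, working in the variable $\Lambda=\lambda^{1/M}$) turns this into the ordinary polynomial
\begin{equation*}
\Lambda^{p+q+m}+a\epsilon\,\Lambda^{p+q}+b\,\Lambda^{p}\mp 2\epsilon=0,\qquad p=M\alpha_{1},\ q=M\alpha_{2},\ m=M\alpha_{3}.
\end{equation*}

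Next I would invoke the fact that for an incommensurate system the marginal-stability locus in the $\Lambda$-plane is the ray $|\arg\Lambda|=\pi/(2M)$, and set $\Lambda=\gamma e^{i(\theta)}$ with $(\theta)=\pm\pi/(2M)$. Separating the polynomial into real and imaginary parts gives
\begin{align*}
\gamma^{p+q+m}\cos(p+q+m)(\theta)+a\epsilon\,\gamma^{p+q}\cos(p+q)(\theta)+b\,\gamma^{p}\cos p(\theta)&=\pm 2\epsilon,\\
\gamma^{p+q+m}\sin(p+q+m)(\theta)+a\epsilon\,\gamma^{p+q}\sin(p+q)(\theta)+b\,\gamma^{p}\sin p(\theta)&=0,
\end{align*}
and dividing the imaginary relation by $\gamma^{p}$ leaves $\gamma^{q+m}\sin(p+q+m)(\theta)+a\epsilon\,\gamma^{q}\sin(p+q)(\theta)+b\,\sin p(\theta)=0$; here $\sin p(\theta)$, $a\epsilon\sin(p+q)(\theta)$, $\sin(p+q+m)(\theta)$ play the role that $\sin(\theta),\sin2(\theta),\sin3(\theta)$ played in Theorem~\ref{main1}. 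Since $0<p(\theta)=\alpha_{1}\pi/2\le\pi/2$ one always has $\sin p(\theta)>0$, so the constant term is positive when $b=7>0$; the signs of the leading and middle coefficients are then fixed by the hypotheses $p\ge m$ (Case~I), or $m>p$ together with $0<(p+q+m)(\theta)\le\pi$ (Case~II), and with $a=0.129>0$ an intermediate-value / Descartes sign-change argument yields a positive root $\gamma_{H}$. This $\gamma_{H}$ is the common modulus of the critical pair $\Lambda=\gamma_{H}e^{\pm i(\theta)}$ (i.e.\ of $\lambda_{1,2}=\gamma_{H}^{M}e^{\pm iM(\theta)}$), which sits on the boundary ray --- the Hopf condition.

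Finally, with $a$, $b$, $(\theta)$ and $\gamma_{H}$ treated as known, the real-part relation is linear in $\epsilon$,
\begin{equation*}
\epsilon\big(\gamma_{H}^{p+q}\cos(p+q)(\theta)\,a\mp 2\big)=-\big(\gamma_{H}^{p+q+m}\cos(p+q+m)(\theta)+b\,\gamma_{H}^{p}\cos p(\theta)\big),
\end{equation*}
and dividing by the bracket --- which must be assumed non-zero, exactly as $\alpha\neq\pm\tfrac1\pi\cos^{-1}(\pm\tfrac{2}{a\gamma_{H}^{2}})$ was required in Theorem~\ref{main1} --- produces the stated $\epsilon_{H}$. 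The main obstacle is the middle step: after dividing out $\gamma^{p}$ one faces a polynomial of degree $q+m$ in $\gamma$ rather than a quadratic, so the tidy ``product of the two roots is negative'' device of Theorem~\ref{main1} is no longer available, and the existence of $\gamma_{H}$ must instead be forced by pinning down the signs of $\sin p(\theta)$, $\sin(p+q)(\theta)$, $\sin(p+q+m)(\theta)$ --- which is precisely what the dichotomy $p\ge m$ versus $m>p,\ (p+q+m)(\theta)\le\pi$ is designed to accomplish. As in Theorem~\ref{main1}, the argument only places the critical eigenvalue pair on the boundary ray and does not separately verify the transversality (crossing-speed) condition, so $\epsilon_{H}$ should be read as the Hopf value delivered by the eigenvalue criterion.
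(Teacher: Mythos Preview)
Your overall architecture matches the paper's: same equilibria, same Jacobian, same characteristic polynomial in $\Lambda=\lambda^{1/M}$, same marginal ray $|\arg\Lambda|=\pi/(2M)$, same real/imaginary split, and the same linear solve for $\epsilon_{H}$ from the real part. The disagreement is in the middle step, and it is a genuine gap.

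You propose to extract $\gamma_{H}$ from the imaginary relation
\[
\gamma^{q+m}\sin(p{+}q{+}m)(\theta)+a\epsilon\,\gamma^{q}\sin(p{+}q)(\theta)+b\,\sin p(\theta)=0
\]
by a Descartes/IVT sign argument, and \emph{then} feed $\gamma_{H}$ into the real part to get $\epsilon_{H}$. But the middle coefficient here is $a\epsilon\sin(p{+}q)(\theta)$, whose sign depends on $\epsilon$ --- the very quantity you have not yet determined. So the sign pattern is not fixed by the hypotheses alone, and the argument is circular as written. The paper breaks the circularity by running the two equations in the opposite order: it first solves the real part for $\epsilon$ as a function of $r$, substitutes that expression into the imaginary part, and after clearing denominators arrives at an $\epsilon$-free polynomial
\[
a\,r^{2p+2q+m}\sin(m\theta)-ab\,r^{2p+q}\sin(q\theta)-2\,r^{p+q+m}\sin(p{+}q{+}m)(\theta)-2b\,r^{p}\sin(p\theta)=0,
\]
and it is \emph{this} equation whose coefficients are analysed for sign changes.

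This also explains the dichotomy $p\ge m$ versus $m>p$, which you attribute to pinning down the signs of $\sin p(\theta)$, $\sin(p{+}q)(\theta)$, $\sin(p{+}q{+}m)(\theta)$. In the paper it serves a different purpose: it decides which of the two middle exponents $2p+q$ and $p+q+m$ is larger, hence the ordering of terms in the Routh-type array built from the $\epsilon$-free polynomial. The extra restriction $0<(p{+}q{+}m)(\theta)\le\pi$ in Case~II is there because, once $m>p$, the sign of $\sin(p{+}q{+}m)(\theta)$ genuinely enters the first column of that array and must be forced positive to guarantee a single sign change. Your closing caveat about transversality not being checked is accurate and applies equally to the paper.
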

	\begin{proof}
		Consider the given system
		\begin{equation}
			\begin{split}\label{l1}
				D^{\alpha_{1}}x(t)& = y(t),
				\\	D^{\alpha_{2}}y(t)& = z(t),
				\\D^{\alpha_{3}}z(t) &= -\epsilon^{2}-by(t)-a \epsilon z(t)+(x(t))^{2}.
			\end{split}	
		\end{equation}
		To obtain equilibrium points, let,$y(t)=0$, $z(t)=0$ in the given system,
		\begin{equation}
			\begin{split}
				-\epsilon^{2}-by-a \epsilon z+x^{2}=0,\\x=\pm \epsilon.
			\end{split}
		\end{equation}
		Hence equilibrium points are
		$E_{1}=(\epsilon,0,0)$ and $E_{2}=(-\epsilon,0,0)$ .
		
	\noindent	Let	$ x(t)=f_{1}, y(t)=f_{2}, z(t)=f_{3}$ in equation \eqref{l1}.\\
		The Jacobian Matrix is given by,
		\begin{equation}\label{l2}
			\begin{split}
				J_{E_{{1},{2}}}&=
				\begin{bmatrix}
					0 & 1 & 0
					\\ 0 & 0 & 1
					\\2x & -b & -a \epsilon    	
				\end{bmatrix}
				\\ &=\begin{bmatrix}
					0 & 1 & 0
					\\ 0 & 0 & 1
					\\\pm 2\epsilon & -b & -a \epsilon    	
				\end{bmatrix}
			\end{split}
		\end{equation}
		
		\noindent The characteristic polynomial is given by,
		\begin{equation}\label{l2}
			\begin{split}
				c(\lambda)&=
				det\left(\begin{bmatrix}
					\lambda^{M\alpha_{1}} & 0 & 0
					\\ 0 & \lambda^{M\alpha_{2}} & 0
					\\0 & 0 & \lambda^{M\alpha_{3}} \epsilon    	
				\end{bmatrix}-\begin{bmatrix}
					0 & 1 & 0
					\\ 0 & 0 & 1
					\\\pm 2\epsilon & -b & -a \epsilon    	
				\end{bmatrix}\right)
				\\ &=\begin{bmatrix}
					\lambda^{M\alpha_{1}}& -1 & 0
					\\ 0 & \lambda^{M\alpha_{2}} & -1
					\\\mp 2\epsilon & b & \lambda^{M\alpha_{3}} +a \epsilon   	
				\end{bmatrix}.
				\\&=\lambda^{M\alpha_{1}+M\alpha_{2}+M\alpha_{3}}+\lambda^{M\alpha_{1}+M\alpha_{2}} a \epsilon+\lambda^{M\alpha_{1}} b\pm2\epsilon=0
			\end{split}
		\end{equation}
		Now $|arg(\lambda)|=\frac{\pi}{2M}$.
		
		\noindent Let $\lambda=re^{i(\theta)}$ where $(\theta)=\pm\frac{\pi}{2M}$.
		\begin{equation}
		c(re^{i(\theta)})=(re^{i(\theta)})^{M\alpha_{1}+M\alpha_{2}+M\alpha_{3}}+(re^{i(\theta)})^{M\alpha_{1}+M\alpha_{2}} a \epsilon+(re^{i(\theta)})a^{M\alpha_{1}} b\pm2\epsilon=0
	    \end{equation}
\noindent We have $e^{i\theta}=\cos(\theta)+i\sin(\theta)$
		
	\noindent Let $M\alpha_{1}=p, M\alpha_{2}=q, M\alpha_{3}=m$.
		\begin{equation}\label{aa1}
			\begin{split}
				c(r(\cos(\theta)+i\sin(\theta)))=(r(\cos(\theta)+i\sin(\theta)))^{p+q+m}\\+(r(\cos(\theta)+i\sin(\theta)))^{p+q}a\epsilon+((\cos(\theta)+i\sin(\theta))^{p} b\pm2\epsilon=0
			\end{split}
		\end{equation}
		
		\noindent Equating real and imaginay parts of \eqref{aa1} we get,
		
		\noindent Real part of \eqref{aa1},
		\begin{equation}\label{aa2}
			r^{p+q+m}\cos(p+q+m)(\theta)+r^{p+q}\cos(p+q)(\theta)a\epsilon+r^{p}\cos(p)(\theta)b\pm2\epsilon=0
		\end{equation}
		Imaginary part of\eqref{aa1},
		\begin{equation}\label{aa3}
			r^{p+q+m}\sin(p+q+m)(\theta)+r^{p+q}\sin(p+q)(\theta)a\epsilon+r^{p}\sin(p)(\theta)b=0
		\end{equation}
		
		Let $\epsilon_{H}$ be the bifurcation parameter. Hence from \eqref{aa2} we have,
		\begin{equation}
			\epsilon_{H}=\frac{-(\gamma_{H}^{p+q+m}\cos(p+q+m)(\theta)+\gamma_{H}^{p}\cos p(\theta)b)}{\gamma_{H}^{p+q}\cos(p+q)(\theta)a\mp2},
		\end{equation}	
		where $\gamma_{H}$ denotes the modulus of the eigen values of the $Jacobian$ at equilibrium points $E_{1}=(\epsilon,0,0),$ 
		
		$E_{2}=(-\epsilon,0,0)$ of the system.
		
		Consider \eqref{aa3} and substitute $\epsilon=\epsilon_{H}$.
		\begin{equation}
			\begin{split}
				r^{p+q+m}\sin(p+q+m)(\theta)+r^{p}\sin(p)(\theta)b+\\r^{p+q}\sin(p+q)(\theta)a\left( \frac{-(\gamma_{H}^{p+q+m}\cos(p+q+m)(\theta)+\gamma_{H}^{p}\cos p(\theta)b)}{\gamma_{H}^{p+q}\cos(p+q)(\theta)a\mp2}\right)=0
			\end{split}
		\end{equation}
		\begin{equation}
			\begin{split}
				r^{2p+2q+m}s\sin (m\theta)-r^{2p+q}ab\sin(q\theta)+r^{p+q+m}(\pm2\sin(p+q+m)(\theta))+\\r^{p}(\pm2\sin(p\theta)b)=0
			\end{split}
		\end{equation}
		
		We will consider the following equation,
		\begin{equation}\label{aa4}
			r^{2p+2q+m}\sin (m\theta)-r^{2p+q}ab\sin(q\theta)-2 r^{p+q+m}\sin(p+q+m)(\theta))-2br^{p}\sin(p\theta)=0
		\end{equation}
		
		We apply Routh-Hourwiz criteria to equation \eqref{aa4} to check if it has real positive root.
		
		\textbf{Case I}: $p>m$ i.e. $(r^{2p+q}>r^{p+q+m})$
		
		Hence we have,
		
		\begin{tabular}{cc}
			$a\sin(m\theta)$&$-2\sin(p+q+m)(\theta) $ \\\\
			$-ab\sin(q\theta)$ &$-2b\sin(p\theta)$ \\\\
			$\frac{-2\sin(m+q)(\theta)\sin(p+q)(\theta)}{\sin(q\theta)}$  \\
		\end{tabular}
		
		Consider $a\sin(m\theta)$,
		
		$0<(m\theta)<m\frac{\pi}{2M}=\alpha_{3}\frac{\pi}{2}<\frac{\pi}{2}$.Hence $(m\theta)\in \left(0,\frac{\pi}{2} \right)$,$a\sin(m\theta)>0$.
		
		Consider $-ab\sin(q\theta)$,
		
		$0<(q\theta)<q\frac{\pi}{2M}=\alpha_{2}\frac{\pi}{2}<\frac{\pi}{2}$, $(q\theta)\in \left(0,\frac{\pi}{2} \right)$. Now $a>0, b>0,\sin(q\theta)$. Hence $-ab\sin(q\theta)<0.$
		
		Consider $\frac{-2\sin(m+q)(\theta)\sin(p+q)(\theta)}{\sin(q\theta)}$,
		
		$0<((m+q)\theta)<(m+q)\frac{\pi}{2M}=(\alpha_{2}+\alpha_{3})\frac{\pi}{2}<\pi, (m+q)(\theta)\in(0,\pi)$. Hence $\sin(m+q)(\theta)>0$.
		
		$0<((p+q)\theta)<(p+q)\frac{\pi}{2M}=(\alpha_{1}+\alpha_{2})\frac{\pi}{2}<\pi, (p+q)(\theta)\in(0,\pi)$. Hence $\sin(p+q)(\theta)>0$. Also $\sin(q\theta)$.
		
		Hence $\frac{-2\sin(m+q)(\theta)\sin(p+q)(\theta)}{\sin(q\theta)}<0$.
		
		Since one sign inversion occurs in Case I by Routh-Hourwiz criteria one positive root $\gamma_{H}$ shall occur.
		
		\textbf{Case II}: $p<m$ i.e. $(r^{2p+q}<r^{p+q+m})$
		
		Hence we have,\\
		\begin{tabular}{cc}
			$a\sin(m)(\theta)$&$-ab\sin(q)(\theta) $ \\\\
			$-2\sin(p+q+m)(\theta)$ &$-2b\sin(p)(\theta)$ \\\\
			$-ab\frac{\sin(m+q)(\theta)\sin(p+q)(\theta)}{\sin(p+q+m)(\theta)}$\\
		\end{tabular}
		
		Consider $a\sin(m\theta)$,
		
		$0<(m\theta)<m\frac{\pi}{2M}=\alpha_{3}\frac{\pi}{2}<\frac{\pi}{2}$.Hence $(m\theta)\in \left(0,\frac{\pi}{2} \right)$,$a\sin(m\theta)>0$.
		
		Consider $-2\sin(p+q+m)(\theta)$,
		
		$0<(p+q+m)(\theta)<(p+q+m)\frac{\pi}{2M}=(\alpha_{1}+\alpha_{2}+\alpha_{3})\frac{\pi}{2}<3\frac{\pi}{2}$
		
		Further we have three more cases for $(p+q+m)(\theta)$ lying between $0$ to $\frac{3\pi}{2}$.
		
		Case (i): $0<(p+q+m)(\theta)\leq\pi$ , hence $\sin(p+q+m)(\theta)>0$ then $-2\sin(p+q+m)(\theta)<0$.
		
		Case (ii): $\pi<(p+q+m)(\theta)\leq\frac{3\pi}{2}$,hence $\sin(p+q+m)(\theta)<0$ then $-2\sin(p+q+m)(\theta)>0$.
		
		Case (iii):$(p+q+m)(\theta)=\pi$,hence $\sin(p+q+m)(\theta)>0$ then $-2\sin(p+q+m)(\theta)<0$.
		
		Consider $-ab\frac{\sin(m+q)(\theta)\sin(p+q)(\theta)}{\sin(p+q+m)(\theta)}$,
		We have $a>0, b>0, \sin(p)(\theta)>0,$
		
		$\sin(p)(\theta)\sin(m)(\theta)>0$.
		If Case (i) then $-ab\frac{\sin(m+q)(\theta)\sin(p+q)(\theta)}{\sin(p+q+m)(\theta)}<0$,
		
		if Case (ii) then $-ab\frac{\sin(m+q)(\theta)\sin(p+q)(\theta)}{\sin(p+q+m)(\theta)}>0$, if Case (iii) then $-ab\frac{\sin(m+q)(\theta)\sin(p+q)(\theta)}{\sin(p+q+m)(\theta)}<0$.
		
		Since one sign inversion occurs in Case I and Case III by Routh-Hourwiz criteria one positive root $\gamma_{H}$ shall occur.
		
		Case II is not valid since there is no sign inversion, hence no positive root.
		
		\textbf{Case III}: $p=m$ i.e. $(r^{p}=r^{m})$
		
		The equation (\ref{aa4}) will be $r^{2p+2q}a\sin(p)(\theta)-r^{p+q}(ab\sin(q)(\theta)+2\sin(2p+q)(\theta))-2b\sin(p)(\theta)=0$.

		Hence we have,

		\begin{tabular}{cc}
			$a\sin(p)(\theta)$&$-2b\sin(p)(\theta) $ \\\\
			$-(ab\sin(q)(\theta)+2\sin(2p+q)(\theta))$ &0 \\\\
			$-2b\sin(p)(\theta)$\\
		\end{tabular}
		Consider $a\sin(p\theta)$,
		
		$0<(p\theta)<p\frac{\pi}{2M}=\alpha_{1}\frac{\pi}{2}<\frac{\pi}{2}$.Hence $(p\theta)\in \left(0,\frac{\pi}{2} \right)$,$a\sin(p\theta)>0$.

		Consider $-(ab\sin(q)(\theta)+2\sin(2p+q)(\theta))$,
		
		$0<(2p+q)(\theta)<(2p+q)\frac{\pi}{2M}=(2\alpha_{1}+\alpha_{2})\frac{\pi}{2}<3\frac{\pi}{2}$ . Also $\sin(q)(\theta)>0$.
		
		Further we have three more cases for $(2p+q)(\theta)$ lying between $0$ to $\frac{3\pi}{2}$.
		
		Case (i): $0<(2p+q)(\theta)\leq\pi$ , hence $-(ab\sin(q)(\theta)+2\sin(2p+q)(\theta))<0$.
		
		Case (ii): $\pi<(2p+q)(\theta)\leq\frac{3\pi}{2}$,hence $-(ab\sin(q)(\theta)+2\sin(2p+q)(\theta))>0$(For $a=0.129, b=7$).
		
		Case (iii):$(p+q+m)(\theta)=\pi$, hence $-(ab\sin(q)(\theta)+2\sin(2p+q)(\theta))<0$.
		
		Consider $-2b\sin(p\theta)$, $0<(p\theta)<p\frac{\pi}{2M}=\alpha_{1}\frac{\pi}{2}<\frac{\pi}{2}$.
		
		Hence $(p\theta)\in \left(0,\frac{\pi}{2} \right)$, $-2b\sin(p\theta)<0$.
		
		Since one sign inversion occurs in all Cases by Routh-Hourwiz criteria one positive root $\gamma_{H}$ shall occur.
	
		\begin{figure}[p]
		\centering
		\includegraphics[width=0.7\linewidth]{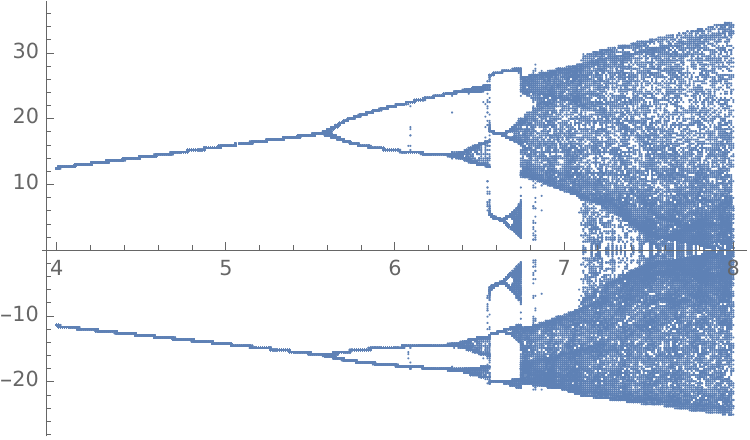}
		\caption{Bifurcation diagram with parameter $\epsilon$ from range 4.000 to 7.999 and $\alpha_{1}=1$, $\alpha_{2}=0.99$, $\alpha_{3}=1$.}
		\label{Bif_4000}
	\end{figure}
	
	\begin{figure}[p]
			\centering
			\begin{subfigure}[b]{0.45\textwidth}
					\includegraphics[width=0.7\linewidth]{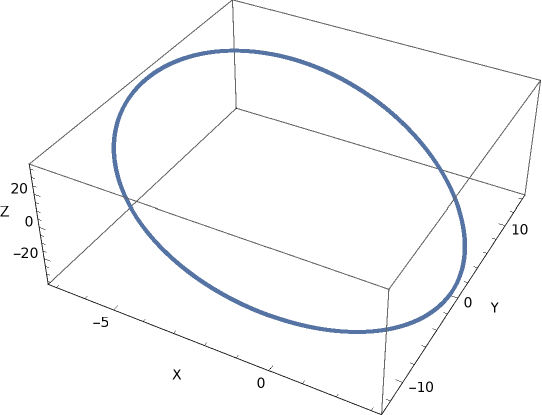}
					\caption{Phase Portrait for $\epsilon$=4.102 and $\alpha_{1}=1$, $\alpha_{2}=0.99$, $\alpha_{3}=1$}
					\label{ppp1}
				\end{subfigure}
			\begin{subfigure}[b]{0.45\textwidth}
					\includegraphics[width=0.7\linewidth]{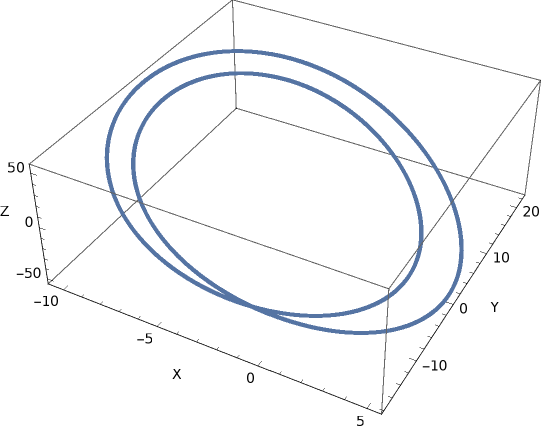}
					\caption{Phase Portrait for $\epsilon$=5.770 and $\alpha_{1}=1$, $\alpha_{2}=0.99$, $\alpha_{3}=1$}
					\label{ppp2}
				\end{subfigure}
			\begin{subfigure}[b]{0.45\textwidth}
					\includegraphics[width=0.7\linewidth]{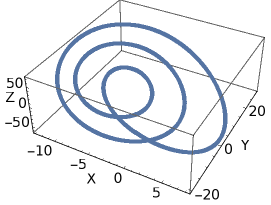}
					\caption{Phase Portrait for $\epsilon$=6.574 and $\alpha_{1}=1$, $\alpha_{2}=0.99$, $\alpha_{3}=1$}
					\label{ppp3}
				\end{subfigure}
			\begin{subfigure}[b]{0.45\textwidth}
					\includegraphics[width=0.7\linewidth]{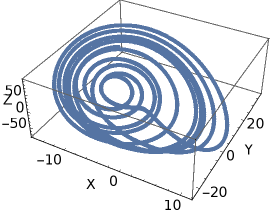}
					\caption{Phase Portrait for $\epsilon$=7.913 and $\alpha_{1}=1$, $\alpha_{2}=0.99$, $\alpha_{3}=1$}
					\label{ppp4}
				\end{subfigure}
			\caption{Phase Portraits with parameter $\epsilon$ from range 4.000 to 7.999 and $\alpha_{1}=1$, $\alpha_{2}=0.99$, $\alpha_{3}=1$.}
		\end{figure}

	\end{proof}

\section{Result Analysis of Incommensurate Quadratic Jerk system }

In this section we have analysied system for \eqref{main1}. Figure \eqref{Bif_4000} represents a process known as bifurcation for parameter $\epsilon$ from range 4.000 to 7.999 and $\alpha_{1}=1$, $\alpha_{2}=0.99$, $\alpha_{3}=1$. The dense distribution of the data represents exponential increase of orbits leading to chaos.

\noindent Figure \eqref{ppp1} represents phase potrait at $3.783$ and $\alpha=0.99$. Single orbit represents no chaos. Figure \eqref{ppp2} and \eqref{ppp3} show the increase in number of orbits which will lead to chaos. Figure \eqref{ppp4} shows the chaos for the system \eqref{main1} at parameters $\epsilon=7.913$, $\alpha_{1}=1$, $\alpha_{2}=0.99$, $\alpha_{3}=1$, $a=0.129$ and $b=7$.

	\section{Conclusion}
	In this article, we studied the chaos and bifurcations of the quadratic jerk system. The system in equation \eqref{main1} has been taken into consideration with parameters $a, b$ and $\epsilon$. The fractional order ($\alpha$) is $0<\alpha\leq1$. The numerical simulations haven been conducted to obtain two cases for which the system in equation \eqref{main1} goes under zero-hopf bifurcation. To support the numerical results, phase potraits, bifurcation and lyapunov digrams are presented. The conditions and parameter values have been identified for which the system in equation \eqref{main1}  is chaotic.

	\bibliographystyle{ieeetr}
	
	\bibliography{bibfile}

\begin{thebibliography}{10}

\bibitem{podlubny1999fractional}
I.~Podlubny, {\em Fractional Differential Equations}.
\newblock Academic press, San Diego, USA, 1998.

\bibitem{deshpande2019analysis}
A.~S. Deshpande, V.~Daftardar-Gejji, and P.~Vellaisamy, ``Analysis of
  intersections of trajectories of systems of linear fractional differential
  equations,'' {\em Chaos: An Interdisciplinary Journal of Nonlinear Science},
  vol.~29, no.~1, p.~013113, 2019.

\bibitem{daftardar2010chaos}
V.~Daftardar-Gejji and S.~Bhalekar, ``Chaos in fractional ordered liu system,''
  {\em Computers \& mathematics with applications}, vol.~59, no.~3,
  pp.~1117--1127, 2010.

\bibitem{kaslik2012nonlinear}
E.~Kaslik and S.~Sivasundaram, ``Nonlinear dynamics and chaos in
  fractional-order neural networks,'' {\em Neural Networks}, vol.~32,
  pp.~245--256, 2012.

\bibitem{DESHPANDE2017119}
A.~S. Deshpande and V.~Daftardar-Gejji, ``On disappearance of chaos in
  fractional systems,'' {\em Chaos, Solitons \& Fractals}, vol.~102, pp.~119 --
  126, 2017.
\newblock Future Directions in Fractional Calculus Research and Applications.

\bibitem{zou2018review}
C.~Zou, L.~Zhang, X.~Hu, Z.~Wang, T.~Wik, and M.~Pecht, ``A review of
  fractional-order techniques applied to lithium-ion batteries, lead-acid
  batteries, and supercapacitors,'' {\em Journal of Power Sources}, vol.~390,
  pp.~286--296, 2018.

\bibitem{herrmann2023fractional}
R.~Herrmann, ``Fractional calculus within the optical model used in nuclear and
  particle physics,'' {\em Journal of Physics G: Nuclear and Particle Physics},
  vol.~50, no.~6, p.~065102, 2023.

\bibitem{natiq2022image}
H.~Natiq, N.~M. Al-Saidi, S.~J. Obaiys, M.~N. Mahdi, and A.~K. Farhan, ``Image
  encryption based on local fractional derivative complex logistic map,'' {\em
  Symmetry}, vol.~14, no.~9, p.~1874, 2022.

\bibitem{craiem2010fractional}
D.~Craiem and R.~L. Magin, ``Fractional order models of viscoelasticity as an
  alternative in the analysis of red blood cell (rbc) membrane mechanics,''
  {\em Physical biology}, vol.~7, no.~1, p.~013001, 2010.

\bibitem{redhwan2019some}
S.~S. Redhwan, S.~L. Shaikh, and M.~S. Abdo, ``Some properties of sadik
  transform and its applications of fractional-order dynamical systems in
  control theory,'' {\em Advances in the Theory of Nonlinear Analysis and its
  Application}, vol.~4, no.~1, pp.~51--66, 2019.

\bibitem{yajima2018geometric}
T.~Yajima and H.~Nagahama, ``Geometric structures of fractional dynamical
  systems in non-riemannian space: Applications to mechanical and
  electromechanical systems,'' {\em Annalen der Physik}, vol.~530, no.~5,
  p.~1700391, 2018.

\bibitem{viera2022artificial}
E.~Viera-Martin, J.~G{\'o}mez-Aguilar, J.~Sol{\'\i}s-P{\'e}rez,
  J.~Hern{\'a}ndez-P{\'e}rez, and R.~Escobar-Jim{\'e}nez, ``Artificial neural
  networks: a practical review of applications involving fractional calculus,''
  {\em The European Physical Journal Special Topics}, vol.~231, no.~10,
  pp.~2059--2095, 2022.

\bibitem{yadav2017stability}
V.~K. Yadav, S.~Das, B.~S. Bhadauria, A.~K. Singh, and M.~Srivastava,
  ``Stability analysis, chaos control of a fractional order chaotic chemical
  reactor system and its function projective synchronization with parametric
  uncertainties,'' {\em Chinese Journal of Physics}, vol.~55, no.~3,
  pp.~594--605, 2017.

\bibitem{geoffrey2003quadratic}
J.~Geoffrey~Chase, L.~R. Barroso, and S.~Hunt, ``Quadratic jerk regulation and
  the seismic control of civil structures,'' {\em Earthquake engineering \&
  structural dynamics}, vol.~32, no.~13, pp.~2047--2062, 2003.

\bibitem{hammouch2018circuit}
Z.~Hammouch and T.~Mekkaoui, ``Circuit design and simulation for the
  fractional-order chaotic behavior in a new dynamical system,'' {\em Complex
  \& Intelligent Systems}, vol.~4, no.~4, pp.~251--260, 2018.

\bibitem{rajagopal2020exponential}
K.~Rajagopal, A.~Akgul, S.~Jafari, A.~Karthikeyan, U.~Cavusoglu, and S.~Kacar,
  ``An exponential jerk system, its fractional-order form with dynamical
  analysis and engineering application,'' {\em Soft Computing}, vol.~24,
  pp.~7469--7479, 2020.

\bibitem{vaidyanathan2018new}
S.~Vaidyanathan, A.~Akgul, and S.~Kacar, ``A new chaotic jerk system with two
  quadratic nonlinearities and its applications to electronic circuit
  implementation and image encryption,'' {\em International Journal of Computer
  Applications in Technology}, vol.~58, no.~2, pp.~89--101, 2018.

\end{thebibliography}
\end{document}